\documentclass[reqno,a4paper,11pt]{article}

\usepackage{amsmath,amssymb,amsthm}

\usepackage{mathrsfs}

\usepackage{mathtools}
\usepackage{commath}

\usepackage{thmtools}
\usepackage{thm-restate}

\usepackage{cases}
\usepackage{enumitem}
\setlist[enumerate,1]{label=(\roman*)}

\usepackage[pdftex, pdfborderstyle={/S/U/W 0}]{hyperref}
\hypersetup{
    colorlinks=true,
    linkcolor=red,
    citecolor=blue,
}

\usepackage{cleveref}

\usepackage{etoolbox}

\usepackage{comment}

\usepackage[numbers]{natbib}

\numberwithin{equation}{section}

\ifdefined\thmcolor
\declaretheoremstyle[
  shaded={bgcolor=\thmcolor}
]{plain}
\else
\fi

\ifdefined\defcolor
\declaretheoremstyle[
  headfont=\normalfont\bfseries,
  bodyfont=\normalfont,
  shaded={bgcolor=\defcolor}
]{noital}
\else
\declaretheoremstyle[
  headfont=\normalfont\bfseries,
  bodyfont=\normalfont,
]{noital}
\fi

\declaretheorem[style=plain,numberwithin=section,name=Theorem]{theorem}

\declaretheorem[style=plain,sibling=theorem,name=Lemma]{lemma}
\declaretheorem[style=plain,sibling=theorem,name=Corollary]{corollary}
\declaretheorem[style=plain,sibling=theorem,name=Conjecture]{conjecture}
\declaretheorem[style=plain,sibling=theorem,name=Claim]{claim}

\declaretheorem[style=plain,sibling=theorem,name=Question]{question}
\declaretheorem[style=plain,sibling=theorem,name=Observation]{observation}

\declaretheorem[style=plain,numbered=no,name=Theorem]{theorem-n}
\declaretheorem[style=plain,numbered=no,name=Proposition]{proposition-n}
\declaretheorem[style=plain,numbered=no,name=Lemma]{lemma-n}
\declaretheorem[style=plain,numbered=no,name=Corollary]{corollary-n}
\declaretheorem[style=plain,numbered=no,name=Conjecture]{conjecture-n}
\declaretheorem[style=plain,numbered=no,name=Claim]{claim-n}
\declaretheorem[style=plain,numbered=no,name=Fact]{fact-n}
\declaretheorem[style=plain,numbered=no,name=Open Problem]{openproblem-n}
\declaretheorem[style=plain,numbered=no,name=Question]{question-n}
\declaretheorem[style=plain,numbered=no,name=Observation]{observation-n}

\declaretheorem[style=noital,numbered=no,name=Remark]{remark-n}
\declaretheorem[style=noital,numbered=no,name=Definition]{definition-n}
\declaretheorem[style=noital,numbered=no,name=Construction]{construction-n}
\declaretheorem[style=noital,numbered=no,name=Example]{example-n}

\newcommand{\defined}{\mathrel{\coloneqq}}

\DeclarePairedDelimiter{\p}{\lparen}{\rparen}

\let\b\relax
\DeclarePairedDelimiter{\b}{\lbrack}{\rbrack}

\newcommand{\st}{\mathbin{\colon}}

\undef{\set}
\DeclarePairedDelimiter{\set}{\lbrace}{\rbrace}

\undef{\emptyset}
\newcommand{\emptyset}{\varnothing}

\newcommand{\from}{\colon}

\DeclarePairedDelimiter{\floor}{\lfloor}{\rfloor}
\DeclarePairedDelimiter{\ceil}{\lceil}{\rceil}

\let\d\relax
\newcommand{\d}{\, \mathrm{d}}

\undef{\mod}
\newcommand{\mod}[1]{\ (\mathrm{mod}\ #1)}

\undef{\abs}
\DeclarePairedDelimiterX{\abs}[1]
  {\lvert}{\rvert}{\ifblank{#1}{\,\cdot\,}{#1}}

\undef{\norm}
\DeclarePairedDelimiterX{\norm}[1]
  {\lVert}{\rVert}{\ifblank{#1}{\,\cdot\,}{#1}}

\DeclarePairedDelimiterX{\inner}[2]
  {\langle}{\rangle}{\ifblank{#1}{\,\cdot\,}{#1},\ifblank{#2}{\,\cdot\,}{#2}}

\DeclarePairedDelimiterX{\absinner}[2]
  {|\langle}{\rangle|}{\ifblank{#1}{\,\cdot\,}{#1},\ifblank{#2}{\,\cdot\,}{#2}}

\DeclareMathDelimiter{\given}
  {\mathbin}{symbols}{"6A}{largesymbols}{"0C}

\DeclareMathOperator{\Prob}{\mathbb{P}}
\DeclarePairedDelimiterXPP{\prob}[1]
  {\Prob}{\lparen}{\rparen}{}
  {\renewcommand{\given}{\nonscript\;\delimsize\vert\nonscript\;\mathopen{}}#1}

\DeclareMathOperator{\Expec}{\mathbb{E}}
\DeclarePairedDelimiterXPP{\expec}[1]
  {\Expec}{\lparen}{\rparen}{}
  {\renewcommand{\given}{\nonscript\;\delimsize\vert\nonscript\;\mathopen{}}#1}

\DeclareMathOperator{\Var}{Var}
\DeclarePairedDelimiterXPP{\var}[1]
  {\Var}{\lparen}{\rparen}{}
  {\renewcommand{\given}{\nonscript\;\delimsize\vert\nonscript\;\mathopen{}}#1}

\DeclareMathOperator{\Cov}{Cov}
\DeclarePairedDelimiterXPP{\cov}[2]
  {\Cov}{\lparen}{\rparen}{}{#1,#2}

\newcommand{\sseq}{\subseteq}

\newcommand{\EE}{\mathbb{E}}

\usepackage{geometry}
\usepackage{scrextend}

\usepackage[T1]{fontenc}
\usepackage{titlesec}

\usepackage{tikz}

\titleformat{\section}{\centering\bfseries\scshape\Large}{\thesection}{1em}{}
\titleformat{\subsection}{\bfseries\scshape\large}{\thesubsection}{1em}{}

\newcommand{\red}{\text{Red}}
\newcommand{\blue}{\text{Blue}}
\newcommand{\nbhd}[2]{N^{(#1)}(#2)}
\newcommand{\colours}{\set{\red,\blue}}

\begin{document}

\title{\textsc{\bfseries Hypercube geodesics with few colour changes}}

\renewcommand{\thefootnote}{\fnsymbol{footnote}}

\author{\textsc{Lawrence Hollom}\footnotemark[1]}

\footnotetext[1]{\href{mailto:lawrence.hollom@epfl.ch}{lawrence.hollom@epfl.ch}. Institute of Mathematics, EPFL, Lausanne, Switzerland}

\renewcommand{\thefootnote}{\arabic{footnote}}

\date{}

\maketitle

\begin{abstract}
	What is the maximum, over all 2-colourings of the edges of the $n$-dimensional hypercube $Q_n$, of the minimal number of times a path between a vertex $v$ and its antipode $\bar{v}$ changes colour?
	A conjecture of Norine, in a form due to Feder and Subi, states that this maximum should be 1.
	The previous best-known upper bound on the number of colour changes was $(\tfrac{5}{16} + o(1))n$ due to Kirchweger, Peitl, Subercaseaux, and Szeider.
	We improve this bound and answer a question of Leader and Long by finding a geodesic path with at most $(\tfrac{\pi}{2} + o(1))\sqrt{n}$ colour changes. 
	In fact, we show that this is the expected number of colour changes for a uniformly random start vertex.
	This is optimal (up to the constant) when the start vertex is chosen uniformly at random.
\end{abstract}


\section{Introduction}
\label{sec:intro}

The hypercube is a central object in combinatorics, and many problems concerning the structures that can be found on taking subsets or colourings of the edges have been studied. 
While a result of Alon, Radoi{\v{c}}i{\'c}, Sudakov, and Vondr{\'a}k \cite{ARSV06} precisely characterises those graphs for which a monochromatic copy will appear in a sufficiently large $k$-coloured hypercube $Q_n$ (for $k$ fixed), much remains unknown about structures which grow with the dimension $n$, or if one only wants approximately monochromatic structures.
One example of such a problem is a conjecture of Norine (see \cite{Nor08}), who asked whether every antipodal 2-colouring of the hypercube $Q_n$ (a colouring where antipodal edges receive different colours) contains a monochromatic path between some pair of antipodal vertices.
This conjecture was modified by Feder and Subi \cite{FS13} to the following.

\begin{conjecture}
	\label{conj:path}
	Every 2-colouring of $E(Q_n)$ contains a path between some pair of antipodal vertices which changes colour at most once.
\end{conjecture}

It was proved in \cite{FS13} that one can find a monochromatic path between some pair of vertices at Hamming distance at least $\ceil{n/2}$, and thus there is a pair of antipodal vertices and a path between them with at most $\floor{n/2}$ colour changes.
Long \cite{Lon13} proved that subgraphs of $Q_n$ of large average degree must contain exponentially long paths, and later Leader and Long \cite{LL14} showed that one can also find geodesics of length $\ceil{n/2}$ in such subgraphs, proving the existence of a geodesic between antipodes with at most $\floor{n/2}$ colour changes.
They moreover conjectured that the path in \Cref{conj:path} can be taken to be a geodesic and posed the following, weaker, question.

\begin{question}
	\label{q:few}
	Is it true that for every 2-colouring of $E(Q_n)$, there exist two antipodal vertices $x$ and $x'$ that are joined by a path that changes colour $o(n)$ times?
\end{question}

Our main result is the following, which provides an affirmative answer to \Cref{q:few}.

\begin{theorem}
	\label{thm:main}
	For any 2-colouring of the edges of the hypercube $Q_n$, there is a vertex $v\in V(Q_n)$ and a geodesic path from $v$ to its antipode $\bar{v}$ which changes colour at most $(\tfrac{\pi}{2} + o(1))\sqrt{n}$ times.
\end{theorem}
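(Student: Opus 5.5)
The plan is to prove the stronger, averaged statement from the abstract: if $v$ is uniformly random and the geodesic is built by a suitable randomised greedy rule, then the expected number of colour changes is at most $(\tfrac{\pi}{2}+o(1))\sqrt{n}$. Some choice of $v$ and of the randomness then achieves this bound, which gives \Cref{thm:main}.

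The rule is as follows. A geodesic from $v$ to $\bar v$ is exactly an ordering of the $n$ coordinate directions. At a current vertex $u$ with set $S$ of unused directions, $\abs{S}=k$, the $k$ edges from $u$ in directions of $S$ are split into $r$ red and $k-r$ blue. We continue in the colour of the previous edge whenever some unused direction has that colour, choosing uniformly at random among such directions. The colour changes at this step only if the current colour class among the $k$ remaining edges is empty. Writing $X_k$ for the indicator of a change when $k$ directions remain, the expected number of changes is $\sum_{k=1}^n \Prob(X_k=1)$. The target is therefore a bound $\Prob(X_k=1)\le (\tfrac{\pi}{4}+o(1))/\sqrt{k}$, at least on average over $k$, since $\sum_{k\le n} k^{-1/2}\approx 2\sqrt n$.

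To bound $\Prob(X_k=1)$, I would compare the greedy walk with the unbiased walk in which the order of directions is a uniform random permutation. For the unbiased walk, the pair $(u,S)$ at each time is uniform: $u$ is a uniform vertex and $S$ a uniform $k$-set independent of $u$. For the greedy walk I would track the quantity $D=r-(k-r)$, the red-minus-blue count of remaining edges at the current vertex. This quantity behaves like a lazy $\pm1$ walk: each step removes one edge of the current colour, and re-randomises only through the change of vertex. A change of colour corresponds to the current colour's count hitting zero. The mechanism I would try to formalise is therefore that $\mathbb{1}[\text{change at step }k]$ is dominated by the event that a centred walk of length about $k$ sits at the boundary. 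That event has probability of order $\binom{k}{k/2}2^{-k}\sim\sqrt{2/(\pi k)}$, and after the bookkeeping between the two colours this should give the constant $\pi/4$ per $1/\sqrt k$. Concretely, I would exchange the order of averaging, double counting over pairs (start vertex, sequence of directions), to show that the distribution of $(u,S)$ reached by the greedy walk has bounded density with respect to the uniform one, up to $1+o(1)$ factors, on the events that matter.

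The main obstacle is exactly this last step. The colouring is adversarial, so the degree counts at $u$ are not binomial, and the greedy rule biases which vertices are visited. I would first try a symmetrisation: pair each greedy trajectory with its antipodal reversal (starting from $\bar v$ and running backwards). The aim is to show that the events "red class empty at $(u,S)$" and "blue class empty at $(u,S)$" are charged against a set of configurations whose total measure is controlled by a ballot-type or reflection identity, which holds for every colouring. If that fails, the fallback is a weaker but still sufficient estimate $\Prob(X_k=1)=O(k^{-1/2})$ via a second-moment or Littlewood--Offord style anticoncentration argument for $D$ over a uniformly random choice of $S$. This yields $O(\sqrt n)$ changes, and the sharp constant $\pi/2$ would then be recovered by optimising the coupling.
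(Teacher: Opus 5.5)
\textbf{There is a genuine gap.} All the quantitative content sits in the step you call the main obstacle. That step asks that the greedy walk visit pairs $(u,S)$ with density $1+o(1)$ relative to the uniform walk on the relevant events, so that $\Prob(X_k=1)\le(\tfrac{\pi}{4}+o(1))k^{-1/2}$. It is not just unproved but false.

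Take the colouring by layers from the concluding remarks: edges between layers $i$ and $i+1$ around a fixed $x_0$ are coloured by the parity of $i$. From $v$ in layer $L$, your rule zigzags monochromatically between two adjacent layers. It does so until either the remaining directions pointing towards $x_0$ or those pointing away from it are used up. After that, all remaining directions point the same way and their colours alternate, so each of the last roughly $\abs{n-2L}$ steps is a colour change. Hence $\Prob(X_k=1)=1-o(1)$ for $k=o(\sqrt n)$.

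On exactly these dead-end events, the greedy walk sits in configurations (all $k$ remaining directions pointing the same way) whose probability under the uniform reference law is of order $2^{-k}$. The density ratio is therefore exponential in $k$, not $1+o(1)$. The total here is still $O(\sqrt n)$, but only because these changes are forced for every geodesic, not because of the local anticoncentration you describe. In general, the greedy rule biases precisely the dead-end events; this is the difficulty the paper's introduction warns about. Neither the reversal pairing nor the Littlewood--Offord fallback addresses the fact that $S$ is not uniform given $u$ under your walk. The constant $\tfrac{\pi}{4}$ is also asserted rather than derived (note $\sqrt{2/\pi}\neq\tfrac{\pi}{4}$).

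The missing idea is to avoid analysing any path process at all. The paper works with the optimal values $f_c(x,y)$, the fewest colour changes on an $x$--$y$ geodesic ending in colour $c$. These satisfy $\abs{f_\red(x,y)-f_\blue(x,y)}\le 1$, and $f_c(x,y)\le f_c(x,z)$ for $z\in I^x_c(y)$. The paper fixes the far endpoint $v$ and takes the start $u$ uniform at distance $k$. Then the directions at $v$ pointing towards $u$ form a uniform $k$-subset, independent of the colouring and of any choices. So the number of red edges at $v$ towards $u$ is exactly hypergeometric.

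Passing from the colour-weighted average of $f_c(u,v)$ over edges towards $u$ to the one over edges away from $u$ costs at most $\EE_k[S^u(v)]\le\tfrac12\sqrt{n/(k(n-k))}\,(1+o(1))$. A double-counting identity between layers $k-1$ and $k$, together with the monotonicity above, makes these increments telescope. This gives $\sum_k\tfrac12\sqrt{n/(k(n-k))}\approx\tfrac{\pi}{2}\sqrt n$. Since all the randomness lies in the uniform choice of $u$ relative to a fixed $v$, no density comparison is ever needed.
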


Indeed, this continues the pattern of geodesic paths being sufficient.
We note that \Cref{thm:main} constitutes the first sub-linear bound on this problem.
Before our result, Dvo\v{r}\'{a}k \cite{Dvo19} used a careful consideration colourings of copies of $Q_3 \sseq Q_n$, proved that there always exists a pair of antipodal vertices and a geodesic between them with at most $(\tfrac{3}{8}+o(1))n$ colour changes.
In a different direction, \Cref{conj:path} is also known to hold for values of $n$ up to 8 \cite{WW18,ZBHKC17,FS24,KPSS25} by application of computer search with SAT-solvers.

Combining these two approaches, Kirchweger, Peitl, Subercaseaux, and Szeider \cite{KPSS25} found a more efficient encoding of the problem, allowing them to not only prove the $n=8$ case of the conjecture, as mentioned above, but to also use an automated analysis of $Q_6$ to improve Dvo\v{r}\'{a}k's bound to $(\tfrac{5}{16} + o(1))n$ colour changes.

\Cref{q:few} was repeated by Ivan and Johnson as part of a larger collection of open problems \cite{Imre23}.
They therein raised the question of what can be said if we insist that the vertex $v$ at the start of the geodesic is chosen uniformly at random in the hypercube, as follows.

\begin{question}
	\label{q:random}
	How large can be the minimum number of colour-changes on a path between a randomly chosen pair of antipodal vertices? In other words, determine the maximum (over all edge colourings of $Q_n$) of the quantity
	\begin{equation}
		\label{eq:q-random}
		\frac{1}{2^n} \sum_{v\in Q_n} (\text{the minimum number of colour changes on a }v\text{-to-}\bar{v}\text{ path})
	\end{equation}
\end{question}

Our proof chooses the end of the geodesic uniformly at random and -- as will be discussed in the concluding remarks -- there is a colouring of $Q_n$ where the value in \eqref{eq:q-random} is $\Omega(\sqrt{n})$.
This provides a strong resolution of \Cref{q:random} for geodesics as well as more general paths.



\subsection{Intuition and proof outline}

We prove that there is a choice of vertex $v$ and a geodesic path from $v$ to $\bar{v}$ with few colour changes.
Our starting vertex $v$ is chosen uniformly at random in $V(Q_n)$, but even so we must be careful in our choice of path: if we attempt to pick a geodesic from $v$ to $\bar{v}$ greedily, there is no immediate way of ruling out the possibility of running into lots of ``dead-end'' vertices -- vertices $x$ where our geodesic path arrives via, say, a red edge, but all edges from $x$ away from $v$ are blue, and so we are forced to change colour. 
A key intuition for our proof is the following informal heuristic, which says that arriving at such a dead end should be very unlikely.

Fix a vertex $x$, and consider picking a vertex $v$ uniformly at random amongst all vertices at a distance $k$ from $x$, and assume that we want our geodesic to start from $v$ and go through $x$. 
The only way in which $x$ could be a dead end is if either all of its red neighbours (that is, neighbouring vertices in $Q_n$ connected by a red edge) or all of its blue neighbours are closer to $v$ than $x$ is.
However, if $x$ is in many red edges and many blue edges and $k$ is not too close to $n$, then this is very unlikely to happen.
If on the other hand $x$ has very few edges of one colour, say, blue, then while it does have a non-negligible chance of being a dead end in blue, most paths arriving at $x$ use a red edge, from which there are plenty of options for continuing the geodesic path with another red edge.

To formalise this intuition, we consider the function $f_c(x,y)$, which is the minimum number of colour changes in a geodesic between vertices $x$ and $y$ ending in colour $c$.
We in fact compute a weighted average of this function around the vertex $y$, weighted by the proportion of the edges around $y$ in each colour.
In fact, we compute several version of such an average, and relate them to each other in sequence (see \Cref{fig:fig} for a pictorial representation of these averages).

The crux of the argument is to step from weighting by the edges from $y$ towards $x$ to weighting by the edges away from $x$. But, if $y$ is fixed and $x$ is chosen uniformly at random at a specific distance from $y$, then the number of red edges from $y$ towards $x$ is distributed as a hypergeometric random variable, which is very easy to control.
Linearity of expectation then allows us to sum these changes over a random path from $x$ to $\bar{x}$, and the result follows.


\section{Proof of the main theorem}
\label{sec:proof}

We first give a complete description of all the notation that we will use throughout the proof, and then use this notation to state our key lemma, \Cref{lem:key}, in \Cref{subsec:notation} before proceeding to give a proof of this lemma in \Cref{subsec:proof}.


\subsection{Notation and reduction to a key lemma}
\label{subsec:notation}

Throughout this section we work on the graph $Q_n$ of the hypercube in $n$ dimensions with a colouring $\chi\from E(Q_n)\to\colours$.
The value of $n$ will be fixed throughout.
Denote the vertex set and edge set of $Q_n$ as $V$ and $E$ respectively.
We write $x\sim y$ to mean that vertices $x$ and $y$ are adjacent, and $x\sim_c y$ to mean that they are connected by an edge with colour $c$.
We call the two colours $\red$ and $\blue$.

We write $d(x,y)$ for the Hamming distance from $x$ to $y$ (i.e.\ the graph distance in $Q_n$), so for example $x\sim y$ implies $d(x,y) = 1$, and we write
\begin{equation*}
	\nbhd{k}{x} \defined \set{y \in V \st d(x, y) = k}
\end{equation*}
for the $k$th neighbourhood of $x$.

We now introduce our non-standard notation.
For all $x,y\in V$ and $c\in\colours$, we define the sets $I$ and $J$, which can be thought as the edges from a vertex respectively towards and away from the start of the path that we construct.
\begin{align*}
	I_c^x(y) &\defined \set{z\in V\st z\sim_c y, \; d(x,z) = d(x, y) - 1}, \\
	I^x(y) &\defined \set{z\in V\st z\sim y, \; d(x,z) = d(x, y) - 1}, \\
	J_c^x(y) &\defined \set{z\in V\st z\sim_c y, \; d(x,z) = d(x, y) + 1}, \;\text{ and}\\
	J^x(y) &\defined \set{z\in V\st z\sim y, \; d(x,z) = d(x, y) + 1}.
\end{align*}
We then define the following number, which will be explained in \Cref{subsec:proof}.
\begin{equation}
	\label{eq:s-def}
	S^x(y) \defined \abs[\bigg]{\frac{\abs{J^x_\red(y)}}{n - d(x, y)} - \frac{\abs{I^x_\red(y)}}{d(x,y)}} = \abs[\bigg]{\frac{\abs{J^x_\blue(y)}}{n - d(x, y)} - \frac{\abs{I^x_\blue(y)}}{d(x,y)}}.
\end{equation}
For now we just note that the above equality comes from the fact that
\begin{equation*}
	\abs{J_\red^x(y)} + \abs{J_\blue^x(y)} = n - d(x,y) \quad \text{and} \quad \abs{I_\red^x(y)} + \abs{I_\blue^x(y)} = d(x,y).
\end{equation*}

We now discuss our use of random variables.
We select two points $u,v\in V$ independently uniformly at random. 
$\EE$ will be the unconditioned expectation, and for $k\in[n]$ (where $[n]=\set{1,2,\dotsc,n}$), we will use the shorthand
\begin{equation*}
	\EE_k[\;\cdot\;] \defined \EE[\;\cdot\; \given d(u,v) = k].
\end{equation*}

Write $f_c(x, y)$ for the minimum number of colour changes on a geodesic path from $x$ to $y$ ending in colour $c$.
We allow the path to change colour at $y$, even if no edge of that colour appears in the path; for example, if $x\sim_\blue y$, then $f_\blue(x,y) = 0$ and $f_\red(x, y) = 1$.
We record a simple observation about these values.

\begin{observation}
	\label{obs:close}
	For all vertices $x, y \in V$, we have
	\begin{equation*}
		\abs[\big]{f_\red(x,y) - f_\blue(x,y)} \leq 1.
	\end{equation*}
\end{observation}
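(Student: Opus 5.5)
The plan is to compare the two quantities directly: take an optimal geodesic for one colour and modify only its final edge, at the cost of at most one extra colour change. By the symmetry between the two colours, it suffices to show $f_\red(x,y) \le f_\blue(x,y) + 1$; the reverse inequality then follows with the colours swapped.

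First, I will deal with the degenerate case $x = y$. Here the only geodesic is the trivial path, and I read the convention as giving $f_c(x,x) \in \{0,1\}$ for each colour $c$. Whatever the precise convention, the difference between the two values is then at most $1$.

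For $x \neq y$, let $P$ be a geodesic from $x$ to $y$ that attains $f_\blue(x,y)$. I interpret ``ending in colour $\blue$'' in the convention the paper allows: $P$ is regarded as finishing in $\blue$, possibly by means of a notional colour change at $y$ that is counted in $f_\blue(x,y)$. Now view the same path $P$ as a path ending in $\red$. If the last edge of $P$ is red, then $P$ already ends in $\red$, and its count as a $\red$-ending path is at most its $\blue$-ending count. If the last edge is blue, we add one notional change at $y$ to switch to $\red$, which raises the count by exactly one. In either case we get a geodesic ending in $\red$ with at most $f_\blue(x,y) + 1$ colour changes, so $f_\red(x,y) \le f_\blue(x,y) + 1$.

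The only real obstacle is pinning down the convention precisely. The natural formalisation is $f_c(x,y) = \min_P \bigl(\text{changes along } P + [\text{last edge of } P \ne c]\bigr)$. With this definition, the inequality is immediate: for any fixed $P$, the indicator term differs by at most $1$ between $c = \red$ and $c = \blue$, and the first term is identical. Taking the minimum over $P$ preserves this bound, which gives the claim.
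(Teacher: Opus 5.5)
Your proposal is correct and is essentially the paper's own argument: the paper's proof is the single remark that one may always switch colour at $y$ at the cost of one extra change. Your explicit formalisation $f_c(x,y)=\min_P\bigl(\text{changes along }P+[\text{last edge of }P\neq c]\bigr)$ agrees with the paper's example ($x\sim_{\blue}y$ gives $f_{\blue}(x,y)=0$ and $f_{\red}(x,y)=1$) and makes that remark rigorous.
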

This follows from our ability to swap between colours at $y$ if we wish.

We now define three weighted averages, which will be the central objects of our proof.
\begin{align}
	\label{eq:def-f} f(x, y) &\defined \frac{1}{n-d(x, y)} \sum_{z\in J^x(y)} f_{\chi(yz)}(x, y) = \frac{1}{n-d(x, y)} \sum_{c\in\colours} \abs{J_c^x(y)} \, f_c(x,y), \\
	\label{eq:def-g} g(x, y) &\defined \frac{1}{d(x, y)} \sum_{z\in I^x(y)} f_{\chi(yz)}(x, z), \quad\text{ and} \\
	\label{eq:def-h} h(x, y) &\defined \frac{1}{d(x, y)} \sum_{z\in I^x(y)} f_{\chi(yz)}(x, y) = \frac{1}{d(x,y)} \sum_{c\in\colours} \abs{I_c^x(y)} f_c(x,y).
\end{align}

\begin{figure}[ht]
	\centering
	\begin{tikzpicture}[scale=0.7, line cap=round, line join=round]
		\begin{scope}[shift={(0,0)}]
			\node at (0,2.5) {$f(x,y)$};

			\draw (-2,0) -- (0, 2) -- (2, 0) -- (0, -2) -- (-2, 0);

			\node at (-2,-0.7) {$x$};
			\node at (2,-0.7) {$y$};

			\draw[very thick, blue] (2,0) -- (3,0.8);
			\draw[very thick, red] (2,0) -- (3,0.4);
			\draw[very thick, blue] (2,0) -- (3,0.0);
			\draw[very thick, blue] (2,0) -- (3,-0.4);
			\draw[very thick, red] (2,0) -- (3,-0.8);

			\draw[thick] (2,0) circle (0.2);
		\end{scope}
		
		\begin{scope}[shift={(7.5,0)}]
			\node at (0,2.5) {$g(x,y)$};

			\draw (-2,0) -- (0, 2) -- (2, 0) -- (0, -2) -- (-2, 0);

			\node at (-2,-0.7) {$x$};
			\node at (2,-0.7) {$y$};

			\draw[very thick, red] (2,0) -- (1,0.8);
			\draw[very thick, red] (2,0) -- (1,0.4);
			\draw[very thick, blue] (2,0) -- (1,0.0);
			\draw[very thick, red] (2,0) -- (1,-0.4);
			\draw[very thick, blue] (2,0) -- (1,-0.8);

			\draw[thick] (1,0) circle (0.15);
			\draw[thick] (1,0.4) circle (0.15);
			\draw[thick] (1,0.8) circle (0.15);
			\draw[thick] (1,-0.4) circle (0.15);
			\draw[thick] (1,-0.8) circle (0.15);
		\end{scope}
		
		\begin{scope}[shift={(14,0)}]
			\node at (0,2.5) {$h(x,y)$};

			\draw (-2,0) -- (0, 2) -- (2, 0) -- (0, -2) -- (-2, 0);

			\node at (-2,-0.7) {$x$};
			\node at (2,-0.7) {$y$};

			\draw[very thick, red] (2,0) -- (1,0.8);
			\draw[very thick, blue] (2,0) -- (1,0.4);
			\draw[very thick, blue] (2,0) -- (1,0.0);
			\draw[very thick, red] (2,0) -- (1,-0.4);
			\draw[very thick, red] (2,0) -- (1,-0.8);

			\draw[thick] (2,0) circle (0.2);
		\end{scope}
	\end{tikzpicture}
	\caption{A pictorial representation of the functions $f$, $g$, and $h$. Each function averages $f_c(x,z)$ (the minimum number of colour changes on a geodesic path from $x$ to $z$ ending in colour $c$) over the edges shown in colour, where $c$ is the colour of the edge and $z$ is the circled vertex. The large diamond represents the sub-cube between $x$ and $y$.}
	\label{fig:fig}
\end{figure}

We are now ready to state our key lemma, which is as follows.

\begin{lemma}
	\label{lem:key}
	Fix an integer $k\in [n]$.We have
	\begin{equation}
		\label{eq:key}
		\EE_k \b[\big]{f(u,v)} \leq \frac{1}{2} (1+o(1)) \sum_{j=1}^k \sqrt{\frac{j(n-j)}{n}}\p[\Big]{\frac{1}{n-j} + \frac{1}{j}}.
	\end{equation}
\end{lemma}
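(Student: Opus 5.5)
Induction on $k$, via the chain $f \to h \to g \to f$. The key per-vertex identities are as follows. By \Cref{obs:close}, writing $a=f_\red(x,y)$ and $b=f_\blue(x,y)$, we have $f(x,y)-h(x,y) = \bigl(\frac{|J_\red|}{n-d}-\frac{|I_\red|}{d}\bigr)(a-b)$, since the weights in each of $f$ and $h$ sum to one. Hence
\[
f(x,y)\le h(x,y)+S^x(y).
\]
Next, $h(x,y)\le g(x,y)$ pointwise. Indeed, for $z\in I_c^x(y)$, any optimal geodesic from $x$ to $z$ ending in colour $c$ extends by the edge $zy$, which has colour $c$, to a geodesic from $x$ to $y$ ending in colour $c$ with no extra change. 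So $f_c(x,y)\le f_c(x,z)$ for every such $z$, and we can compare $h$ and $g$ term by term.

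Then I average over random pairs. Under $\EE_k$, the pair $(u,v)$ is a uniform pair at distance $k$. Choosing a uniform $z\in I^u(v)$ gives a uniform pair $(u,z)$ at distance $k-1$, with $v$ a uniform element of $J^u(z)$. Therefore
\[
\EE_k[g(u,v)] = \EE_k\Bigl[\tfrac1k\sum_{z\in I^u(v)} f_{\chi(vz)}(u,z)\Bigr] = \EE_{k-1}\Bigl[\tfrac{1}{n-k+1}\sum_{w\in J^u(v)} f_{\chi(vw)}(u,v)\Bigr]=\EE_{k-1}[f(u,v)].
\]
Combining the three steps gives $\EE_k[f]\le \EE_{k-1}[f]+\EE_k[S^u(v)]$, with base case $k=0$, where $f=0$ (or $k=1$ handled directly). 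Summing, the problem reduces to showing
\[
\EE_j[S^u(v)]\le \tfrac12(1+o(1))\sqrt{\tfrac{j(n-j)}{n}}\Bigl(\tfrac1{n-j}+\tfrac1j\Bigr)
\]
for each $j$, uniformly enough that the $o(1)$ survives the sum.

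For this estimate, condition on $v$ and let $r$ be the number of red edges at $v$. Given $v$, the set of coordinates where $u$ differs from $v$ is a uniform $j$-subset, so $X=|I^u_\red(v)|$ is hypergeometric with parameters $(n,r,j)$ and $|J^u_\red(v)|=r-X$. Then
\[
S=\Bigl|\tfrac{r-X}{n-j}-\tfrac{X}{j}\Bigr| = \tfrac{n}{j(n-j)}\Bigl|X-\tfrac{rj}{n}\Bigr|,
\]
and by Cauchy--Schwarz,
\[
\EE|X-\EE X|\le\sqrt{\Var X}=\sqrt{j\tfrac rn\bigl(1-\tfrac rn\bigr)\tfrac{n-j}{n-1}}\le \tfrac12\sqrt{\tfrac{j(n-j)}{n-1}}.
\]
This yields
\[
\EE_j S\le \tfrac{n}{j(n-j)}\cdot\tfrac12\sqrt{\tfrac{j(n-j)}{n-1}}=\tfrac12\sqrt{\tfrac{j(n-j)}{n}}\Bigl(\tfrac1{n-j}+\tfrac1j\Bigr)\sqrt{\tfrac{n}{n-1}},
\]
using $\frac{n}{j(n-j)}=\frac1j+\frac1{n-j}$. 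The factor $\sqrt{n/(n-1)}=1+o(1)$ is uniform in $j$, which gives the claim.

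The main obstacle I expect is the bookkeeping in the $g$-to-$f$ reindexing: the weights $1/k$ and $1/(n-k+1)$ must match exactly under the double-counting of pairs, and the count is $\binom nk k=\binom n{k-1}(n-k+1)$, which works. A secondary issue is the endpoint $j=n$, where $n-d=0$ and $J$ is empty. There I would interpret the $J$-terms as zero, or restrict to $k<n$, noting that $S$ then reduces to $\frac{n}{j(n-j)}|X-\EE X|$ treated as a limiting case, where the hypergeometric variance vanishes.
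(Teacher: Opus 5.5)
Your proposal is correct and follows the paper's proof essentially step for step. You use the same chain $\EE_{k-1}[f(u,v)]=\EE_k[g(u,v)]\ge\EE_k[h(u,v)]\ge\EE_k[f(u,v)]-\EE_k[S^u(v)]$ and the same hypergeometric/Jensen bound on $\EE_k[S^u(v)]$; your exact variance $\frac{jr(n-r)(n-j)}{n^2(n-1)}$ and explicit factor $\sqrt{n/(n-1)}$ make the uniformity of the $o(1)$ in $j$ transparent. At $k=n$, where $f$ is undefined in the paper as well, take your second option: run the induction for $k\le n-1$ and finish with $\EE_n[h(u,v)]\le\EE_n[g(u,v)]=\EE_{n-1}[f(u,v)]$, since $h(u,\bar{u})$ is a convex combination of $f_\red(u,\bar u)$ and $f_\blue(u,\bar u)$ and hence at least their minimum.
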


Once we have \Cref{lem:key}, it is easy to deduce \Cref{thm:main}:

\begin{proof}[Proof of \Cref{thm:main} assuming \Cref{lem:key}]
	We compute an upper bound on $\EE_n[f(u,v)]$, which is the expected optimal number of colour changes between $u$ and $v$ which are conditioned to be antipodal.
	\begin{equation*}
		\frac{1}{2}\sum_{j=1}^{n-1} \sqrt{\frac{j(n-j)}{n}}\p[\Big]{\frac{1}{n-j} + \frac{1}{j}} 
		= \sum_{j=1}^{n-1}\sqrt{\frac{n-j}{nj}} \leq \int_0^{n-1}\sqrt{\frac{n-x}{nx}}\d x
		\leq\sqrt{n} \int_0^{1}\sqrt{\frac{1-y}{y}}\d y = \frac{\pi}{2}\sqrt{n}.
	\end{equation*}
	We have used in the above that the function $\sqrt{(n-x)/(nx)}$ is decreasing in $x$ for $x > 0$. 
	There is thus a choice of $v$ for which there is a path from $v$ to $\bar{v}$ at most the above number of colour changes, from which \Cref{thm:main} follows immediately.
\end{proof}



\subsection{Proof of the key lemma}
\label{subsec:proof}

We prove \Cref{lem:key} by induction on $k$. 
Indeed, the base case $k=0$ is trivial as $f(v,v) = 0$ and the sum in \eqref{eq:key} is over no terms, so is also zero, so it remains to prove the inductive step.
The following three simple claims form the engine of our proof: we step from $\EE_{k-1}\b{f(u,v)}$ to $\EE_k\b{g(u,v)}$, then to $\EE_k\b{h(u,v)}$, and finally to $\EE_k\b{f(u,v)}$, completing the induction.

\begin{claim}
	\label{claim:step1}
	Fix an integer $k\in [n]$. We then have $\EE_{k-1}\b{f(u,v)} = \EE_k \b{g(u,v)}$.
\end{claim}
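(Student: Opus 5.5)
This is a double-counting argument over edges. We count ordered pairs $(u, \text{edge})$ with $u$ uniform, instead of fixing $v$. Both sides will be written as averages over triples $(x,y,z)$ with $z\sim y$ and $d(x,z)=d(x,y)-1=k-1$, where $x$ is the start vertex. Both averages will turn out to be uniform over the same finite set, and the summand will be the same: $f_{\chi(yz)}(x,z)$.

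Consider the left-hand side first. Conditioned on $d(u,v)=k-1$, the pair $(u,v)$ is uniform over ordered pairs at distance $k-1$. Expanding the definition \eqref{eq:def-f},
\begin{equation*}
	\EE_{k-1}\b{f(u,v)} = \EE_{k-1}\b[\Big]{\frac{1}{n-k+1}\sum_{w\in J^u(v)} f_{\chi(vw)}(u,v)}.
\end{equation*}
Here $\abs{J^u(v)} = n-k+1$ exactly. So this is the expectation of $f_{\chi(vw)}(u,v)$ where $(u,v)$ is uniform at distance $k-1$ and $w$ is a uniform neighbour of $v$ with $d(u,w)=k$. Equivalently, it averages $f_{\chi(vw)}(u,v)$ over the set $T$ of triples $(u,v,w)$ with $d(u,v)=k-1$, $d(u,w)=k$ and $v\sim w$. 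Each pair $(u,v)$ extends to exactly $n-k+1$ triples, so the induced distribution on $T$ is uniform.

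Now the right-hand side. By \eqref{eq:def-g},
\begin{equation*}
	\EE_k\b{g(u,v)} = \EE_k\b[\Big]{\frac{1}{k}\sum_{z\in I^u(v)} f_{\chi(vz)}(u,z)}.
\end{equation*}
Since $\abs{I^u(v)}=k$, this is the uniform average of $f_{\chi(vz)}(u,z)$ over triples $(u,v,z)$ with $d(u,v)=k$, $z\sim v$ and $d(u,z)=k-1$. Renaming $(v,z)$ as $(w,v)$, this is exactly the set $T$, and the summand becomes $f_{\chi(wv)}(u,v)$, the same as before. Each pair at distance $k$ extends to exactly $k$ triples, so this distribution on $T$ is also uniform, and the two expectations coincide.

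There is no real obstacle. The only things to check are the regularity counts, $\abs{J^x(y)}=n-d(x,y)$ and $\abs{I^x(y)}=d(x,y)$, which make both conditional distributions uniform on $T$, and the degenerate endpoints. When $k=n$, the left side has $d=n-1$ and $\abs{J}=1$, which is fine. The division by $n-d$ in $f$ would be problematic only at $d=n$, which never occurs on the left since $k-1\le n-1$.
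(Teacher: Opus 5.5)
Your proof is correct and is essentially the paper's argument: both double-count the edges between consecutive distance layers, using $\abs{J^x(y)} = n - d(x,y)$ and $\abs{I^x(y)} = d(x,y)$ to see that each side is the uniform average of $f_{\chi(e)}(u,\cdot)$ evaluated at the endpoint of $e$ nearer to $u$. The paper fixes $u$ and averages over the edges between $\nbhd{k-1}{u}$ and $\nbhd{k}{u}$, which gives the slightly stronger statement conditioned on $u$, whereas you also average over $u$ via the triple set $T$.
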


\begin{proof}
	This claim in fact holds even if we condition on the value of $u$.
	Indeed, let $E_k(u)$ be the set of edges from $\nbhd{k-1}{u}$ and $\nbhd{k}{u}$.
	Both of the expectations in \Cref{claim:step1} are equal to
	\begin{equation*}
		\frac{1}{\abs{E_k(u)}} \sum_{e\in E_k(u)} f_{\chi(e)}(u, v_e),
	\end{equation*}
	where $v_e$ is the vertex of $e$ in $\nbhd{k-1}{u}$.
	Indeed, this follows immediately from the observation that we may count $E_k(u)$ by selecting a vertex $x\in \nbhd{k-1}{u}$ and a neighbour $y$ of $x$ in $\nbhd{k}{u}$, or by first selecting $y$ in $\nbhd{k}{u}$, and then selecting a neighbour $x$ of $y$ in $\nbhd{k-1}{u}$.
\end{proof}

\begin{claim}
	\label{claim:step2}
	For all $x,y \in V$, we have $h(x,y) \leq g(x,y)$.
\end{claim}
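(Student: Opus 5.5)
We will compare $h$ and $g$ term by term over $z \in I^x(y)$. Both are averages over the same index set with the same weight $1/d(x,y)$, so it suffices to prove, for each $z \in I^x(y)$ with $c = \chi(yz)$, that
\begin{equation*}
	f_c(x,y) \leq f_c(x,z).
\end{equation*}
Summing over $z$ and dividing by $d(x,y)$ then gives $h(x,y) \leq g(x,y)$ directly.

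To prove the termwise inequality, take a geodesic $P$ from $x$ to $z$ that ends in colour $c$ and has exactly $f_c(x,z)$ colour changes. Since $z \in I^x(y)$, we have $d(x,z) = d(x,y) - 1$ and $z \sim y$, so appending the edge $zy$ to $P$ gives a geodesic from $x$ to $y$. This new path ends with the edge $zy$, which has colour $c$.

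The only point needing care is the convention that a path may "change colour at its endpoint" without using an edge of that colour. Suppose the last edge of $P$ has colour $c$. Then appending $zy$ adds no change, and the extended path has $f_c(x,z)$ changes and ends in colour $c$.

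Suppose instead that $P$ ends in colour $c$ only through a final switch at $z$. Then its last edge has colour $c' \neq c$, and the switch at $z$ is already counted among the $f_c(x,z)$ changes. The extended path changes from $c'$ to $c$ at $z$ exactly once, so its count is again at most $f_c(x,z)$.

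There is one degenerate case. If $z = x$, so that $P$ has no edges, then $f_c(x,x) = 0$ by the convention used in the base case. The single edge $xy$ of colour $c$ gives $f_c(x,y) = 0$.

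In every case $f_c(x,y) \leq f_c(x,z)$. This bookkeeping of the endpoint-switch convention is the only step requiring attention, and the claim follows.
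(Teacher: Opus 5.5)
Your proof is correct and follows the paper's argument: you reduce to the termwise inequality $f_{\chi(yz)}(x,y) \leq f_{\chi(yz)}(x,z)$ for $z \in I^x(y)$, and prove it by extending an optimal geodesic from $x$ to $z$ along the edge $zy$. Your case analysis of the endpoint-switch convention, including the case $z = x$, spells out what the paper's one-line justification leaves implicit.
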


\begin{proof}
	Observing the definitions \eqref{eq:def-g} and \eqref{eq:def-h} of $g$ and $h$ respectively, it is clear that it suffices to prove that, for a vertex $x\in V$ and an edge $yz\in E$ with $y$ closer to $x$ than $z$ is, we have
	\begin{equation*}
		f_{\chi(yz)}(x, y) \geq f_{\chi(yz)}(x,z).
	\end{equation*}
	Indeed, this holds because a geodesic from $x$ to $y$ ending in colour $yz$ may be extended along the edge $yz$ without incurring an additional colour change.
\end{proof}

\begin{claim}
	\label{claim:step3}
	Fix an integer $k\in [n]$. We then have $\EE_k\b{f(u,v)} \leq \EE_k \b{h(u,v)} + \EE_k\b{S^u(v)}$.
\end{claim}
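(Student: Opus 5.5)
We will in fact prove the pointwise inequality $f(x,y) \leq h(x,y) + S^x(y)$ for all $x,y\in V$ with $1\le d(x,y)\le n-1$. Taking $\EE_k$ then gives \Cref{claim:step3}. The boundary case $d(x,y)=n$ needs separate handling, since there $J^x(y)$ is empty and the normalisation in $f$ degenerates. There we can either read $f(x,\bar x)$ as $\min_c f_c$, which is at most any weighted average of the $f_c$, or note that this case only enters at the final step of the induction, where the convention for $f$ is chosen accordingly.

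Fix $x,y$ and write $d=d(x,y)$. Let $a_c = \abs{J^x_c(y)}/(n-d)$ and $b_c = \abs{I^x_c(y)}/d$ for $c\in\colours$. Each of these is a probability vector over the two colours: $a_\red+a_\blue=1$ and $b_\red+b_\blue=1$. By the definitions \eqref{eq:def-f} and \eqref{eq:def-h},
\begin{equation*}
	f(x,y)-h(x,y) = \sum_{c\in\colours}(a_c-b_c)\,f_c(x,y).
\end{equation*}
Because $\sum_c(a_c-b_c)=0$, this equals $(a_\red-b_\red)\bigl(f_\red(x,y)-f_\blue(x,y)\bigr)$. The quantity $\abs{a_\red-b_\red}$ is exactly $S^x(y)$ by \eqref{eq:s-def}, and \Cref{obs:close} bounds the difference $f_\red(x,y)-f_\blue(x,y)$ by $1$ in absolute value. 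Hence
\begin{equation*}
	f(x,y)-h(x,y)\leq S^x(y).
\end{equation*}

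There is no real obstacle in this argument. The only points needing care are the degenerate distances $d=0$ and $d=n$, where one of the denominators vanishes. The case $k\in[n]$ excludes $d=0$, and $d=n$ is handled by the convention described in the first paragraph.
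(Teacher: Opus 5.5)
Your proof is correct and follows the paper's argument: both write $f-h$ as $\sum_{c}\bigl(\abs{J^x_c(y)}/(n-d)-\abs{I^x_c(y)}/d\bigr)f_c(x,y)$, reduce this to at most $S^x(y)\cdot\abs{f_\red(x,y)-f_\blue(x,y)}$, and then apply \Cref{obs:close}. Proving the inequality pointwise before taking $\EE_k$ (which the paper's later remark about conditioning on $v$ also implies) is slightly cleaner than the paper's display, which puts the expectation on $f_c$ alone although the coefficients are random; your note on the degenerate case $d=n$ also covers a point the paper leaves implicit.
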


\begin{proof}
	We may unwrap the definitions \eqref{eq:s-def}, \eqref{eq:def-f}, and \eqref{eq:def-h} to compute that
	\begin{align*}
		\EE_k\b{f(u,v)} - \EE_k\b{h(u,v)} &= \sum_{c\in\colours}\p[\bigg]{\frac{\abs{J_c^u(v)}}{n-k} - \frac{\abs{I_c^u(v)}}{k}} \, \EE_k\b[\big]{f_c(u,v)} \\
		&\leq \EE_k \b[\Big]{\,S^u(v) \cdot \abs[\big]{f_\red(u,v) - f_\blue(u,v)}\,}.
	\end{align*}
	Recalling that \Cref{obs:close} tells us that $\abs{f_\red(x,y) - f_\blue(x,y)} \leq 1$ for all $x,y\in V$, the claimed inequality follows.
\end{proof}

We remark here that inequality in \Cref{claim:step3} holds even if we condition on the value of $v$, and the proof runs exactly as above. 
However, the weaker statement both aligns with our notation $\EE_k$ and suffices for our needs.

We may combine Claims \ref{claim:step1}, \ref{claim:step2}, and \ref{claim:step3} to arrive at the following important corollary.

\begin{corollary}
	\label{claim:flip}
	The following holds for all $k\in[n-1]$.
	\begin{equation*}
		\EE_{k} \b[\big]{f(u,v)} \leq \EE_{k-1} \b[\big]{f(u,v)} + \EE_k\b[\big]{S^u(v)}.
	\end{equation*}
\end{corollary}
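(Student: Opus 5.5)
The corollary follows by chaining the three claims, so the plan is to fix $k\in[n-1]$ and take them in the order step 3, step 2, step 1. First, \Cref{claim:step3} gives
\begin{equation*}
	\EE_k\b{f(u,v)} \leq \EE_k\b{h(u,v)} + \EE_k\b{S^u(v)}.
\end{equation*}
Next, I would apply \Cref{claim:step2} pointwise. It says $h(x,y)\le g(x,y)$ for every pair $x,y$, so in particular for every pair with $d(x,y)=k$. Taking the conditional expectation $\EE_k$ preserves this inequality, which gives $\EE_k\b{h(u,v)}\le\EE_k\b{g(u,v)}$.

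Finally, \Cref{claim:step1} gives $\EE_k\b{g(u,v)}=\EE_{k-1}\b{f(u,v)}$. Substituting these two relations into the first display yields exactly the inequality claimed in the corollary.

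The only step needing care is checking that every quantity is well defined for the given range of $k$. For $1\le k\le n-1$ we have $d(u,v)=k$, so both denominators $d(u,v)=k$ and $n-d(u,v)=n-k$ appearing in $f$, $h$ and $S^u(v)$ are nonzero. The quantity $\EE_{k-1}\b{f(u,v)}$ involves the denominator $n-(k-1)\ge 2$, which is also nonzero. When $k=1$ it is evaluated at $d(u,v)=0$, i.e.\ $u=v$, where the paper takes $f(v,v)=0$. This range check is the reason the statement stops at $n-1$: at $k=n$ the definition of $f$ would divide by $n-d(u,v)=0$. There is no substantive obstacle beyond this; the argument is a direct combination of results already established.
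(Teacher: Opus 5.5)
Your proposal is correct and follows the same argument as the paper. The paper also obtains the corollary by chaining \Cref{claim:step3}, then \Cref{claim:step2} applied pointwise to pairs at distance $k$ and averaged under $\EE_k$, then \Cref{claim:step1}. Your check of the range of $k$ (nonzero denominators for $1\le k\le n-1$, and $f(v,v)=0$ when $k=1$) is a sensible addition that the paper leaves implicit.
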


To complete the proof of \Cref{lem:key}, it is clear from \Cref{claim:flip} that it suffices to show the following.

\begin{claim}
	\label{claim:compute}
	Fix an integer $k\in [n]$. We have
	\begin{equation*}
		\EE_k \b[\big]{S^u(v)} \leq \frac{1}{2}\sqrt{\frac{k(n-k)}{n}}\p[\Big]{\frac{1}{n-k} + \frac{1}{k}}(1+o(1)).
	\end{equation*}
\end{claim}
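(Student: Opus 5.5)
Condition on $v$ and on the set $R$ of directions in which the red edges at $v$ point; write $r = \abs{R}$, the red degree of $v$. Given $d(u,v)=k$, the set $D$ of coordinates where $u$ and $v$ differ is a uniformly random $k$-subset of $[n]$. The edges from $v$ in directions of $D$ are exactly $I^u(v)$, and the remaining $n-k$ edges form $J^u(v)$. Hence $X \defined \abs{I^u_\red(v)} = \abs{D\cap R}$ is hypergeometric with parameters $(n, r, k)$, and $\abs{J^u_\red(v)} = r - X$. Writing $p = r/n$, this gives
\begin{equation*}
	S^u(v) = \abs[\Big]{\frac{r-X}{n-k} - \frac{X}{k}} = \abs{X - kp}\cdot\p[\Big]{\frac{1}{n-k} + \frac{1}{k}},
\end{equation*}
because $\frac{r-X}{n-k} - \frac{X}{k} = \frac{rk - nX}{k(n-k)} = -\frac{n(X-kp)}{k(n-k)}$ and $\frac{n}{k(n-k)} = \frac1{n-k}+\frac1k$. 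So it suffices to show $\EE\abs{X - \EE X} \le \frac12\sqrt{k(n-k)/n}\,(1+o(1))$ uniformly in $r$, and then average over $v$.

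Next, bound the mean absolute deviation by the standard deviation using Cauchy--Schwarz. The hypergeometric variance is
\begin{equation*}
	\var{X} = k\,p(1-p)\,\frac{n-k}{n-1} \le \frac14\cdot\frac{k(n-k)}{n-1},
\end{equation*}
since $p(1-p)\le\frac14$. Therefore $\EE\abs{X-kp} \le \sqrt{\var{X}} \le \frac12\sqrt{k(n-k)/(n-1)}$. Finally, $\sqrt{n/(n-1)} = 1+o(1)$, and this factor does not depend on $k$ or $r$. Multiplying by $\frac1{n-k}+\frac1k$ and averaging over $v$ (the bound does not depend on $v$) gives the claim. The endpoint $k=n$ is degenerate, since then $J^u(v)$ is empty, but there $X=r$ exactly and $S$ is determined by the convention $0/0$. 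Corollary~\ref{claim:flip} is only used for $k\le n-1$, so I would handle $k=n$ by that convention or just note that it is not needed.

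The argument has no real obstacle. The one step needing care is the identification of $\abs{I^u_\red(v)}$ as hypergeometric, which requires $u$ to be uniform on $\nbhd{k}{v}$ once $v$ is fixed. This holds because the pair $(u,v)$ is uniform subject to $d(u,v)=k$, and by symmetry $u$ given $v$ is uniform on the $k$-th neighbourhood of $v$. The algebraic identity collapsing $S^u(v)$ into $\abs{X-kp}$ is the other point worth checking.
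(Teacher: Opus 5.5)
Your proof is correct and follows essentially the same route as the paper: you condition on $v$, identify the number of red edges at $v$ pointing towards $u$ as hypergeometric, collapse $S^u(v)$ to $\abs{X - kr/n}\bigl(\frac{1}{n-k}+\frac{1}{k}\bigr)$, and bound the mean absolute deviation by the standard deviation. Your variance $k\,p(1-p)\frac{n-k}{n-1}$ is the exact hypergeometric formula; the paper writes the denominator as $n(n-1)(n-2)$ rather than $n^2(n-1)$, which is off only by a factor $\frac{n}{n-2}=1+o(1)$ and still gives an upper bound. Your remark that $k=n$ is degenerate and not needed is also apt.
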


\begin{proof}
	This claim in fact holds even if we condition on $v$. 
	Indeed, fix a vertex $x$ and assume that $x$ is in $r$ red edges.
	Let $u$ be conditioned to be at distance $k$ from $x$, and let $X$ be the random variable of how many of the red edges incident to $x$ point towards $u$ (that is, their other end is closer to $u$ than $x$ is).
	We can recall the definition \eqref{eq:s-def} of the set $S^u(x)$ and compute
	\begin{equation*}
		S^u(x) = \abs[\bigg]{\frac{\abs{J^u_\red(x)}}{n - k} - \frac{\abs{I^u_\red(x)}}{k}} 
		= \abs[\bigg]{\frac{r - X}{n - k} - \frac{X}{k}}.
	\end{equation*}
	We may note that the probability that $X$ is equal to some number $i$ is simply the probability that independently sampling $k$ of the edges incident to $x$ without replacement results in $i$ red edges, and so
	\begin{equation*}
		\prob{X = i} = \frac{\binom{r}{i}\binom{n - r}{k - i}}{\binom{n}{k}}.
	\end{equation*}
	In particular, $X$ has a hypergeometric distribution, for which the expectation and variance are known to be
	\begin{equation}
		\label{eq:true-facts-about-the-x}
		\EE[X] = \frac{k r}{n} \quad \text{ and } \quad \var{X} = \frac{k(n - r)(n - k)r}{n(n-1)(n-2)}.
	\end{equation}
	Indeed, write $X = Y + kr/n$, so that $\EE[Y] = 0$.
	We can easily compute then that
	\begin{equation*}
		S^u(x) = \abs{Y}\p[\Big]{\frac{1}{n-k} + \frac{1}{k}}.
	\end{equation*}
	But we may then apply Jensen's inequality to find that
	\begin{equation*}
		\EE\b[\big]{S^u(x)} = \p[\Big]{\frac{1}{n-k} + \frac{1}{k}} \EE\b[\big]{\,\abs{Y}\,} \leq \p[\Big]{\frac{1}{n-k} + \frac{1}{k}} \sqrt{\var{Y}}.
	\end{equation*}
	But $\var{Y} = \var{X}$, and so by \eqref{eq:true-facts-about-the-x}, we find that
	\begin{align*}
		\EE\b[\big]{S^u(x)} &\leq \p[\Big]{\frac{1}{n-k} + \frac{1}{k}} \sqrt{\frac{k(n - r)(n - k)r}{n(n-1)(n-2)}} \\
		&\leq \sqrt{\frac{k(n-k)}{n}}\p[\Big]{\frac{1}{n-k} + \frac{1}{k}}\sqrt{\frac{(n - r)r}{(n-1)(n-2)}} \\
		&\leq \frac{1}{2}\sqrt{\frac{k(n-k)}{n}}\p[\Big]{\frac{1}{n-k} + \frac{1}{k}}(1+o(1)),
	\end{align*}
	where the $o(1)$ term depends only on $n$, as required.	
\end{proof}

With \Cref{claim:compute} in hand, \Cref{lem:key} follows immediately by induction on $k$.
\qed


\section{Concluding remarks and open problems}
\label{sec:conc}

While our result does make progress towards \Cref{conj:path}, there is still a large gap between our bound of $O(\sqrt{n})$ and the conjectured constant. 
We note that improving the bound further will require some new ideas, as there are colourings of $E(Q_n)$ for which the expected minimum number of colour changes on a path from a vertex $v$ to its antipode $\bar{v}$ is bounded below by $\Omega(\sqrt{n})$.

Indeed, colour the edges of $Q_n$ in layers: pick an arbitrary vertex $x\in V(Q_n)$, and colour the edge $xy$ red if it is an even distance from $x$, and blue if it is at an odd distance.
Let $v\in V(Q_n)$ be selected uniformly at random, and let $L = d(x, v)$ be the random variable equal to the layer of $v$.
Then $L$ has a binomial distribution with parameter $p=1/2$, and so (omitting floors and ceilings for convenience) it can be computed that
\begin{equation*}
	\EE\b[\big]{\,\abs{L - n/2}\,} = 2^{-n}\sum_{j=0}^{n/2} \binom{n}{j}(n-2j) = 2^{-n}n\binom{n-1}{n/2} = \frac{\sqrt{n}}{2}(1 + o(1)).
\end{equation*}
and so on average a path needs roughly $\sqrt{n}$ colour changes to get to the antipodal vertex.
In combination with \Cref{thm:main}, this shows that the answer to \Cref{q:random} is $\Theta(\sqrt{n})$.
It seems reasonable to suspect, but potentially difficult to prove, that colouring by layers maximises the quantity in \eqref{eq:q-random}.

Of course, when colouring by layers, any vertex from the middle layer(s) has a monochromatic path to its antipodal vertex, which is also in the middle layer(s).
However, only a small proportion of vertices of $Q_n$ have this property, and detecting this in general would seem to require ideas beyond those presented here.

Beyond just proving a stronger bound, an advantage of the techniques presented here is the possibility of generalising to more colours.
In the interest of keeping the proofs here as concise as possible, we have not pursued this direction, though it seems that they would generalise to this setting.
We thus suggest a possible generalisation of \Cref{conj:path} to the following.

\begin{question}
	\label{q:general}
	If $E(Q_n)$ is $r$-coloured for some fixed $r \leq n$, must there be a pair of antipodal vertices of $Q_n$ and a geodesic path between them which changes colour at most $r-1$ times?
\end{question}

By colouring edges of $Q_n$ according to their direction, it is clear that \Cref{q:general}, if true, would be the optimal result.

Finally, we can also consider other structures in $Q_n$.
Indeed, it is natural to consider not just a path between two antipodal vertices, but one connecting all vertices: a Hamilton path.

\begin{question}
	\label{q:paths}
	Given $n$, what is the minimal number $h(n)$ such that, however $E(Q_n)$ is 2-coloured, there is a Hamilton path of $Q_n$ which changes colour at most $h(n)$ times? 
\end{question}

We outline an argument to show that there is a colouring of $E(Q_n)$ in which every Hamilton path has at least $\Theta(2^n/n)$ colour changes.
In fact, we show that any spanning tree of $Q_n$ has at least $\Theta(2^n/n)$ monochromatic components in this colouring.

If $n = 2^k - 1$, take $C\sseq Q_n$ to be a Hamming code (every vertex of $Q_n$ is either in $C$ or adjacent to exactly one element of $C$, and all elements of $C$ are pairwise at distance at least 3), and colour edges red if they are incident to $C$ and blue otherwise, then all vertices of $C$ are in different monochromatic components.
As $\abs{C} = 2^n / (n+1)$, we thus know that any spanning tree of $Q_n$ has at least $\Omega(2^n/n)$ monochromatic components.
Taking a suitable sub-cube of $Q_n$ allows this result to be extended to values of $n$ not of the form $2^k - 1$.

For general spanning trees, $\Theta(2^n/n)$ monochromatic components is the truth: a result of \cite{DDGZ01} shows that there is a spanning tree $T$ of $Q_n$ with at least $(1 - C/n)2^n$ leaves for some constant $C$.
Every leaf of $T$ must share a monochromatic component with its unique neighbour vertex, and every non-leaf vertex of $T$ is in at most two monochromatic components, and so $T$ has at most $O(2^n/n)$ monochromatic components.

We close by remarking that it does not seem at all obvious whether one should expect the $\Omega(2^n/n)$ lower bound for $h(n)$ as in \Cref{q:paths} to be the truth, or indeed if the value of $h(n)$ should even be $o(2^n)$.


\subsection{Acknowledgements}
The author would like to thank Bernando Subercaseaux for helpful comments on an earlier draft of the manuscript.
Thanks are also due to James Sarkies for suggesting the arguments presented at the end of \Cref{sec:conc}.


\bibliographystyle{abbrvnat}  
\renewcommand{\bibname}{Bibliography}
\bibliography{main}

\begin{thebibliography}{12}
\providecommand{\natexlab}[1]{#1}
\providecommand{\url}[1]{\texttt{#1}}
\expandafter\ifx\csname urlstyle\endcsname\relax
  \providecommand{\doi}[1]{doi: #1}\else
  \providecommand{\doi}{doi: \begingroup \urlstyle{rm}\Url}\fi

\bibitem[Alon et~al.(2006)Alon, Radoi{\v{c}}i{\'c}, Sudakov, and Vondr{\'a}k]{ARSV06}
N.~Alon, R.~Radoi{\v{c}}i{\'c}, B.~Sudakov, and J.~Vondr{\'a}k.
\newblock A {R}amsey-type result for the hypercube.
\newblock \emph{Journal of Graph Theory}, 53\penalty0 (3):\penalty0 196--208, 2006.

\bibitem[Baber et~al.(2023)Baber, Behague, Calbet, Ellis, Erde, Gray, Ivan, Janzer, Johnson, Mili{\'c}evi{\'c}, Talbot, Tan, and Wickes]{Imre23}
R.~Baber, N.~Behague, A.~Calbet, D.~Ellis, J.~Erde, R.~Gray, M.-R. Ivan, B.~Janzer, R.~Johnson, L.~Mili{\'c}evi{\'c}, J.~Talbot, T.~S. Tan, and B.~Wickes.
\newblock A collection of open problems in celebration of {Imre Leader's} 60th birthday.
\newblock \emph{arXiv preprint arXiv:2310.18163}, 2023.

\bibitem[Duckworth et~al.(2001)Duckworth, Dunne, Gibbons, and Zito]{DDGZ01}
W.~Duckworth, P.~E. Dunne, A.~M. Gibbons, and M.~Zito.
\newblock Leafy spanning trees in hypercubes.
\newblock \emph{Applied Mathematics Letters}, 14\penalty0 (7):\penalty0 801--804, 2001.

\bibitem[Dvo{\v{r}}{\'a}k(2020)]{Dvo19}
V.~Dvo{\v{r}}{\'a}k.
\newblock {A note on Norine's antipodal-colouring conjecture}.
\newblock \emph{The Electronic Journal of Combinatorics}, 27, article P2.26, 2020.

\bibitem[Feder and Subi(2013)]{FS13}
T.~Feder and C.~Subi.
\newblock On hypercube labellings and antipodal monochromatic paths.
\newblock \emph{Discrete Applied Mathematics}, 161\penalty0 (10-11):\penalty0 1421--1426, 2013.

\bibitem[Frankston and Scheinerman(2024)]{FS24}
K.~Frankston and D.~Scheinerman.
\newblock {Proving Norine's conjecture holds for $ n= 7$ via SAT solvers}.
\newblock \emph{arXiv preprint arXiv:2408.02474}, 2024.

\bibitem[Kirchweger et~al.(2025)Kirchweger, Peitl, Subercaseaux, and Szeider]{KPSS25}
M.~Kirchweger, T.~Peitl, B.~Subercaseaux, and S.~Szeider.
\newblock From the finite to the infinite: sharper asymptotic bounds on {N}orin's conjecture via {SAT}.
\newblock \emph{arXiv preprint arXiv:2511.08386}, 2025.

\bibitem[Leader and Long(2014)]{LL14}
I.~Leader and E.~Long.
\newblock Long geodesics in subgraphs of the cube.
\newblock \emph{Discrete Mathematics}, 326:\penalty0 29--33, 2014.

\bibitem[Long(2013)]{Lon13}
E.~Long.
\newblock Long paths and cycles in subgraphs of the cube.
\newblock \emph{Combinatorica}, 33\penalty0 (4):\penalty0 395--428, 2013.

\bibitem[Norine(2008)]{Nor08}
S.~Norine.
\newblock Edge-antipodal colorings of cubes.
\newblock Open Problem Garden. \url{http://www.openproblemgarden.org/op/edge_antipodal_colorings_of_cubes}, 2008.
\newblock Accessed: 2026-05-08.

\bibitem[West and Wise(2018)]{WW18}
D.~B. West and J.~I. Wise.
\newblock Antipodal edge-colorings of hypercubes.
\newblock \emph{Discussiones Mathematicae Graph Theory}, 39\penalty0 (1):\penalty0 271--284, 2018.

\bibitem[Zulkoski et~al.(2017)Zulkoski, Bright, Heinle, Kotsireas, Czarnecki, and Ganesh]{ZBHKC17}
E.~Zulkoski, C.~Bright, A.~Heinle, I.~Kotsireas, K.~Czarnecki, and V.~Ganesh.
\newblock {Combining SAT solvers with computer algebra systems to verify combinatorial conjectures}.
\newblock \emph{Journal of Automated Reasoning}, 58\penalty0 (3):\penalty0 313--339, 2017.

\end{thebibliography}


\end{document}